\newcommand{\acm}{\overline{ac}_m}
\newcommand{\OK}{\mathcal{O}_K}
\newcommand{\MK}{\mathcal{M}_K}
\newcommand{\ord}{\operatorname{ord}}
\def\QQ{{\mathbb Q}}
\def\RR{{\mathbb R}}
\def\ZZ{{\mathbb Z}}
\def\ac{{\overline{ac}}}
\newcommand{\sq}{\mathrel{\square}}
\author{Raf Cluckers}
\address{Universit\'e Lille 1, Laboratoire Painlev\'e, CNRS - UMR 8524, Cit\'e Scientifique, 59655
Villeneuve d'Ascq Cedex, France, and,
Katholieke Universiteit Leuven, Department of Mathematics,
Celestijnenlaan 200B, B-3001 Leu\-ven, Bel\-gium}
\email{Raf.Cluckers@math.univ-lille1.fr}
\urladdr{http://math.univ-lille1.fr/$\sim$cluckers}
\author{Florent Martin}
\address{Fakult\"{a}t f\"{u}r Mathematik, Universit\"{a}t Regensburg, 93040 Regensburg, Germany}
\email{florent.martin@mathematik.uni-regensburg.de}
\urladdr{http://homepages.uni-regensburg.de/$\sim$maf55605/}
\date{}
\begin{document}
\title[Definable, $p$-adic extensions of Lipschitz maps]{
A definable, $p$-adic  analogue of Kirszbraun's Theorem on extensions of Lipschitz maps}

\begin{abstract}
A direct application of Zorn's Lemma gives that every Lipschitz map $f:X\subset \mathbb{Q}_p^n\to \mathbb{Q}_p^\ell$ has an extension
 to a Lipschitz map $\widetilde f: \mathbb{Q}_p^n\to \mathbb{Q}_p^\ell$. This is analogous, but more easy, to Kirszbraun's Theorem about the existence of Lipschitz extensions of Lipschitz maps $S\subset \mathbb{R}^n\to \mathbb{R}^\ell$. Recently, Fischer and Aschenbrenner obtained a definable version  of Kirszbraun's Theorem. In the present paper, we prove in the $p$-adic context that $\widetilde f$ can be taken definable when $f$ is definable, where definable means semi-algebraic or subanalytic (or, some intermediary notion). We proceed by proving the existence of definable, Lipschitz retractions of $\mathbb{Q}_p^n$ to the topological closure of $X$ when $X$ is definable.
 \end{abstract}

\subjclass[2010]{03C98, 12J25 (Primary); 03C60, 32Bxx, 11S80 (Secondary)}
\keywords{$p$-adic semi-algebraic functions, $p$-adic subanalytic functions, Lipschitz
continuous functions, $p$-adic cell decomposition, definable retractions}

\maketitle


\section{Introduction}

A Lipschitz continuous map $f$ with Lipschitz constant $1$ (a Lipschitz map for short) from any subset $X\subset \mathbb{Q}_p^n$ to $\mathbb{Q}_p^\ell$ can be extended to a Lipschitz map $\widetilde f : \QQ_p^n \to \QQ_p^\ell$, by Zorn's lemma. This is explained in the proof of Theorem 1.2 of \cite{Bask}, the key point 
being that if $X$ is moreover closed and $a\in\QQ_p^n$ is arbitrary, then $f$ can be extended to a Lipschitz map $X\cup \{a\} \to \QQ_p^\ell$ by defining the value of $a$ as $f(x_a)$ for a chosen $x_a\in X$ which lies closest to $a$ among the elements of $X$. By Zorn's lemma and an easy passing to the topological closure like in Lemma \ref{lemma:extension} below, $f$ can thus be extended to a Lipschitz map $\widetilde f: \mathbb{Q}_p^n\to\QQ_p^\ell$.
The aim of this paper is to render this construction of $\widetilde f$ constructive, when more is known about $f$. Such a question was raised to us by M.~Aschenbrenner, after work by him and Fischer \cite{AschF} on making such results constructive (more precisely definable) in the real case.

\par
Let us first briefly recall the real situation, where we refer to \cite{AschF} for a more complete context and history.
A Lipschitz map $g:S\subset \RR^n\to \RR^\ell$ can always be extended to a Lipschitz map $\widetilde g: \RR^n\to\RR^\ell$, but the argument is more subtle than just applying Zorn's lemma. In the case that $\ell=1$, the result was observed by McShane \cite{McShane} and independently by Whitney \cite{Whitneyext} in 1934 and can be explained in terms of moduli of continuity of $f$ (see Proposition 5.4 of \cite{AschF}). The case of general $\ell$ is more delicate and was obtained by Kirszbraun \cite{Kirs} also in 1934, partially relying on Zorn's lemma. Recently, Fischer and Aschenbrenner \cite{AschF} showed that $\widetilde g$ can be taken definable when $g$ is definable (in a very general sense). This can be seen as a constructiveness result. Results related to Whitney's extension theorem continue to play a role in differential topology (see e.g.~\cite{MarDomDiff}).

\par
We prove the definability of $\widetilde f$ in the $p$-adic case when $f$ is definable, where definable can mean semi-algebraic or subanalytic, or some intermediary notion, coming respectively from \cite{Mac}, \cite{DenVDD} and \cite{CluLip}.
We approach our $p$-adic result via showing that, for any closed definable subset $X\subset \QQ_p^n$, there exists a definable Lipschitz retraction $$
r:\QQ_p^n\to X,
$$
namely, a Lipschitz map $r:\QQ_p^n\to X$ such that $r(x)=x$ whenever $x\in X$, see Theorem \ref{theo:retract} below. The existence of Lipschitz retractions in the real case exists onto convex closed sets (see Corollary 2.14 of \cite{AschF}), but not for general closed sets. The general existence of Lipschitz retractions in our setting may be somewhat surprising, but in fact, the absence of a convexity condition in whatever form in the $p$-adic case reminds one of a similar absence in the results on piecewise Lipschitz continuity of \cite{ClHa}. 

\par
In the $p$-adic case, there is in fact no difference in difficulty between the $\ell=1$ case and the case of general $\ell$, by the usual definition of the ultra-metric norm as the sup-norm. Naturally, the case where $n=1$, namely, when the domain of $f$ is a subset of $\QQ_p$, is more easy than the case of general $n$ and has been treated recently in \cite{KuiLip}. We prove our results by an induction on $n$, where we use a certain form of cell decomposition/preparation with Lipschitz continuous centers, similar to such a result of \cite{ClHa} but which treated no form of preparation, see Theorem \ref{theo:celldec} below. This decomposition/preparation result is used to geometrically simplify the set $X$ by replacing it by what we call a centred cell. Once we have reduced to the case that $X$ is a centred cell, we use an almost explicit construction of the Lipschitz retraction $r$, with as only non-explicit part some choices of definable Skolem functions. On the way, we obtain a result on the existence of definable isometries with properties adapted to the geometry of $X$, see Proposition \ref{lemma:monomial}.

\par
Our results also hold in families of definable functions, see the variants given by Theorems \ref{theo:1:fam} and \ref{theo:retract:fam} at the end of the paper, and for any fixed finite field extension $K$ of $\QQ_p$.

\subsection{Main results}


To state our main results we first fix some notation.
We consider a finite extension $K$ of $\Qp$.
We denote by $\ord: K \to \Z \cup \{+\infty \}$ the associated valuation and
$|\cdot| : K \to \R_+$  the associated norm, defined by $|x|= q^{-\ord(x)}$ with $q$ the number of elements of the residue field of $K$.
We equip $K^n$ with the product metric, namely
$\displaystyle d(x,y) = \max_{i=1\ldots n} |x_i-y_i|$ for $x=(x_1 \ldots x_n)$ and $y=(y_1 \ldots y_n)$ in $K^n$, and with the metric topology.

\par
Write $\mathcal{O}_K$ for the valuation ring,
$\mathcal{M}_K$ for the maximal ideal of $K$ and $k_K$ for the residue field.
Let us fix $\varpi$ some uniformizer of $\mathcal{O}_K$.
We denote by
$\overline{ac}_m : K \to \mathcal{O}_K / (\mathcal{M}_K^m)$ the map sending nonzero $x\in K$ to
$x \varpi^{- \ord(x)}$ mod $\mathcal{M}_K^m$, and sending zero to zero.
This map is called the $m$-th angular component map.

\par
We denote by $RV$ the union of $K^\times / (1+\mathcal{M}_K)$ and $\{0\}$ and
by $rv : K\to RV$ the quotient map sending $0$ to $0$.
More generally, if $m$ is a positive integer, we set
$RV_m = K^\times / (1+\mathcal{M}_K^m) \cup \{0\}$ and
$rv_m : K \to RV_m$ the quotient map. \par
For $m,n >0 \in \N$, we set
\[ Q_{m,n} =\{ x\in K^\times \st \ord(x) \in n \Z \ \and \ \acm(x)=1 \}.\]

\par

When $X \subset Y \subset K^n$, a retraction from $Y$ to a  $X$ is a map $r: Y \to X$ which is the identity on $X$.
By definable we mean either semi-algebraic, or, subanalytic, or an intermediary structure given by an analytic structure on $\QQ_p$ as in \cite{CluLip}. The notions of semi-algebraic sets and of subanalytic sets are recalled in \cite{ClHa} and based on quantifier elimination results from \cite{Mac}, \cite{DenVDD}, and we refer to \cite{CluLip} for background on more general analytic structures.
A function
$$
f:X\subset \QQ_p^n\to \QQ_p^\ell
$$
is called Lipschitz (in full, Lipschitz continuous with Lipschitz constant $1$) when
$$
|f(x)-f(y)|\leq |x-y| \mbox{ for all $x,y\in X$}.
$$
We call a function $\widetilde f:A\to B$ an extension of a function $f:A_0\to B$ when $A_0\subset A$ and $\widetilde f$ coincides with $f$ on $A_0$.

\bigskip

The following results, and their family versions given below as Theorems \ref{theo:1:fam} and \ref{theo:retract:fam}, are the main results of the paper.
\begin{theo}[$Ext_n$]
\label{theo:1}
Let $X \subset K^n$ be a definable set.
Let $f:X \to K^\ell$ be a definable and Lipschitz function.
Then there exists a definable Lipschitz $\widetilde{f} :K^n \to K^\ell$ which is an extension of $f$. Moreover, we can ensure that the range of $\widetilde{f}$ is contained in the topological closure of the range of $f$.
\end{theo}

\begin{theo}[$Ret_n$]
\label{theo:retract}
Let $X \subset K^n$ be a definable set.
There exists a definable Lipschitz retraction $r : K^n \to \overline{X}$. Here, $\overline{X}$ is the topological closure of $X$ in $K^n$.
\end{theo}

Theorem \ref{theo:1} can be seen as a consequence of Theorem \ref{theo:retract}, as follows.

\begin{proof}[Proof of Theorem \ref{theo:1} knowing Theorem \ref{theo:retract}.]
\label{proof:theo1}
Let $\overline{f} : \overline{X} \to K$ be the unique definable Lipschitz extension of $f$ to the topological closure $\overline{X}$ of $X$, as given by Lemma \ref{lemma:extension} below.
Let $r : K^n \to \overline{X}$ be a definable Lipschitz retraction as given by Theorem \ref{theo:retract}.
Then $\widetilde{f} = \overline{f} \circ r$ extends $f$, and is Lipschitz.
\end{proof}

In fact, we will prove these two theorems together with Proposition \ref{lemma:monomial} by a joint induction on $n$ in Section \ref{sec:proof}.


\begin{rem}[Some remarks about Theorem \ref{theo:retract}]
$ $
\begin{enumerate}
\item
One really need to consider $\overline{X}$ in Theorem \ref{theo:retract}.
For instance, there is no continuous retraction from $K$ to $K^\times$ .
\item The Archimedean analogue of Theorem \ref{theo:retract} is false.
For instance, there is no continuous retraction $r : \R \to \{-1,1\}$.
However, when $X \subset \R^n$ is a closed convex set, the projection
$r : \R^n  \to X$ to the closest point of $X$ is a Lipschitz retraction, see Corollary 2.14 of \cite{AschF}.
\item
It would be interesting to know if Theorems \ref{theo:1} and \ref{theo:retract} hold in some form for other classes of valued fields $K$.
Natural examples would be $\R((t))$, $\C((t))$ or algebraically closed valued fields (see below).
Some difficulties in more general settings are: the absence of definable Skolem functions in general (they are used in the proof of Theorem \ref{theo:retract}), and, 
infiniteness of the residue field (we use the finiteness of the residue field in Corollary \ref{cor:RVtoZ}).
\item
In this form, the analogue of Theorem \ref{theo:retract} does not hold for ACVF, the theory of algebraically closed valued fields.
Indeed, let $L$ be an algebraically closed valued field,
and let $X = \{x\in L \st |x|>1 \}$.
Then $X$ is a closed set,
but one can check that there is no Lipschitz retraction $r : L \to X$.
However, in this example $X$ might not be considered as a closed set, because it is defined by means of $<$.
One might hope that for a "good" notion of definable closed set (such as a set defined with $\leq$, $=$, finite unions and intersections), an analogue of Theorem \ref{theo:retract}
holds in ACVF.
For instance, there exists a definable Lipschitz retraction from $L$ onto $\{x\in L \st |x|\geq 1 \}$.
\end{enumerate}
\end{rem}

\subsection{Acknowledgement}
We would like to thank M.~Aschenbrenner for raising the question of $p$-adic definable Lipschitz extensions to us, T.~Kuijpers for enthusiastic discussions about this problem in the $n=1$ case
and P. Cubides for stimulating discussions. The authors were supported by the European Research Council under the European Community's Seventh Framework Programme (FP7/2007-2013)
with ERC Grant Agreements nr. 615722 MOTMELSUM, by SFB 1085 "Higher invariants" and by the Labex CEMPI (ANR-11-LABX-0007-01). We would also like to thank 
the FIM of the ETH in Z\"urich, where part of the research was done.


\section{Preliminary results}

\subsection{Dimension of definable sets}
To a nonempty definable set $X \subset K^n$, one can associate a dimension, denoted by $\dim(X) \in \N$. It is defined as the maximum of the numbers $k\geq 0$ such that there is a coordinate projection $p:K^n\to K^k$ such that $p(X)$ has nonempty interior in $K^k$.
This dimension, studied in \cite{HaMaAver} in the slightly more general context of P-minimal structures, enjoys some nice properties that we will freely use.
Let us mention that $\dim(\overline{X})=\dim(X)$ and that if $f : X \to Y$ is a definable map then $\dim(f(X)) \leq \dim(X)$.

\subsection{Presburger sets}
\label{section:Presburger}
A Presburger set is a subset of $\Z^n$ defined in the language $\mathcal{L}_{Pres}$ consisting of $+,-,0,1,<$ and, for each $n>0$,  the binary relation $\cdot\equiv_n\cdot$ for congruence modulo $n$.
Since $(\Z, \mathcal{L}_{Pres})$ eliminates quantifiers by \cite{Presburger}, any Presburger set can be seen to be a finite union of sets of the following form:
\[ \{ (u,v)\in \Z^{n} \st u\in U,\ v\in a\Z+b  \ \and \ \alpha(u) \sq_1 v \sq_2 \beta(u) \}\]
where $U \subset \Z^{n-1}$ is a Presburger set, $a,b$ are positive integers, $\alpha, \beta : U \to \Z$ are definable functions and each $\sq_i$ is $<$ or no condition; see \cite{CPres} for related results and some background.

\subsection{Retractions}

We begin with three basic lemmas.

\begin{lemma}
\label{lemma:extension}
Let $X \subset K^n$ and $Y\subset K^\ell$ be definable sets.
Let $f : X\ \to Y$ be a definable Lipschitz function.
There exists a unique Lipschitz extension
$\overline{f} : \overline{X} \to \overline{Y}$ which is also definable.
\end{lemma}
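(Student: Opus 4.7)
The plan has two logical halves: build the extension as an abstract map, then upgrade it to a definable one.

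For the first half, I would start with the classical construction. Given $x\in\overline{X}$, pick any sequence $(x_n)$ in $X$ converging to $x$. The Lipschitz hypothesis gives $|f(x_n)-f(x_m)|\leq |x_n-x_m|$, so $(f(x_n))$ is Cauchy in $K^\ell$, hence converges (since $K$ is complete). Setting $\overline{f}(x)$ to be this limit is well-defined: if $(x'_n)$ is another sequence in $X$ with limit $x$, then $|f(x_n)-f(x'_n)|\le|x_n-x'_n|\to 0$. Continuity then forces uniqueness of any continuous extension on $\overline{X}$. Passing to the limit in the Lipschitz inequality shows $\overline{f}$ is Lipschitz, and since each $f(x_n)\in Y$, the limit lies in $\overline{Y}$.

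The real content is definability. The key observation is that one can characterize the graph of $\overline{f}$ without any mention of sequences or limits, using only the Lipschitz condition:
\[
 \mathrm{Graph}(\overline{f}) = \bigl\{(x,y)\in \overline{X}\times \overline{Y} \ : \ \forall x'\in X,\ |y-f(x')|\leq |x-x'|\bigr\}.
\]
The inclusion $\subseteq$ is immediate by taking $x_n\to x$ in $X$ and passing to the limit in $|f(x_n)-f(x')|\leq |x_n-x'|$. Conversely, if $(x,y)$ belongs to the right-hand side, then choosing any $x_n\in X$ with $x_n\to x$ gives $|y-f(x_n)|\le |x-x_n|\to 0$, hence $y=\lim f(x_n)=\overline{f}(x)$.

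To conclude, I would invoke the fact that in each of the structures considered (semi-algebraic, subanalytic, or the analytic structures from \cite{CluLip}), the topological closure of a definable set is again definable; this is standard for semi-algebraic sets and is part of the cell decomposition/quantifier elimination apparatus in the (sub)analytic cases. Granted this, the formula above defines $\mathrm{Graph}(\overline{f})$ using only one universal quantifier ranging over the definable set $X$, the definable function $f$, the definable sets $\overline{X}$ and $\overline{Y}$, and the valuation, so $\overline{f}$ is definable. The only step that could be considered delicate is the definability of $\overline{X}$ and $\overline{Y}$, but this is a known feature of the languages at hand, so no new work is required here.
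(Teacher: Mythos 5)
Your proof is correct and fills in exactly the details the paper leaves implicit with its one-line proof ("Simple topological and definability argument"); the graph characterization $\mathrm{Graph}(\overline{f}) = \{(x,y)\in \overline{X}\times \overline{Y} : \forall x'\in X,\ |y-f(x')|\leq |x-x'|\}$ is the natural way to make the definability step precise, and the Cauchy-completeness argument handles existence and uniqueness of the set-theoretic extension.

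One minor remark: you do not actually need to invoke definability of $\overline{Y}$ separately. Since $\overline{X}$ is definable (being $\{x : \forall k\in\ZZ\ \exists x'\in X,\ \ord(x-x')\geq k\}$, a first-order formula in the valued-field language), you can define the graph directly as a subset of $\overline{X}\times K^\ell$ by the same formula, and the inclusion of the range in $\overline{Y}$ is then an automatic consequence of the construction rather than an ingredient. This slightly streamlines the logical dependencies but does not change the substance of the argument.
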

\begin{proof}
Simple topological and definability argument.
\end{proof}

\begin{lemma}
\label{lemma:retract}
Let $X\subset Y\subset K^n$.
Let $r : Y \to X$ be a Lipschitz retraction.
Then for all $y\in Y$, $|r(y) -y| = d(y,X)$, where $d(y,X)$ is the infimum of $d(y,x)$ for $x\in X$.
\end{lemma}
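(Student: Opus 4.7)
The plan is to prove equality by showing the two inequalities $|r(y)-y|\geq d(y,X)$ and $|r(y)-y|\leq d(y,X)$, where the second inequality is the nontrivial one and crucially uses the ultrametric nature of the norm on $K^n$.

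For the easy direction, I would just observe that $r(y)\in X$, so $|r(y)-y|$ is one of the distances over which the infimum defining $d(y,X)$ is taken, whence $|r(y)-y|\geq d(y,X)$.

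For the other direction, I would pick an arbitrary $x\in X$ and exploit the retraction property $r(x)=x$ together with the Lipschitz hypothesis: the Lipschitz bound gives $|r(y)-x|=|r(y)-r(x)|\leq |y-x|$. Then the ultrametric triangle inequality yields
\[
|r(y)-y|\leq \max\bigl(|r(y)-x|,\,|x-y|\bigr)\leq |y-x|.
\]
Since $x\in X$ was arbitrary, taking the infimum over $x\in X$ gives $|r(y)-y|\leq d(y,X)$, and the lemma follows.

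The only subtle step is the use of the ultrametric inequality; this is what lets us bound $|r(y)-y|$ by $|y-x|$ rather than by $|y-x|+|r(y)-x|$, which would only give a factor-$2$ estimate in the Archimedean case. No definability, no Skolem functions, no Presburger combinatorics are needed here; the entire content is the interplay of the retraction identity, the Lipschitz constant $1$, and the strong triangle inequality.
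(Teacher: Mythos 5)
Your proof is correct and uses the same essential ingredients as the paper's: the retraction identity $r(x)=x$, the Lipschitz bound, and the ultrametric triangle inequality applied to $r(y)-y=(r(y)-x)+(x-y)$. The only difference is presentational — the paper argues by contradiction (picking $x$ with $|x-y|<|r(y)-y|$ and invoking the ultrametric equality $|(r(y)-y)+(y-x)|=|r(y)-y|$ when the two summands have distinct norms), whereas your direct version, taking an infimum over all $x\in X$, is arguably a little cleaner.
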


\begin{proof}
Let us assume that $|r(y) -y|>d(y,X)$, and let $x\in X$ such that
$|r(y)-y|>|x-y|$.
Then
\[|r(y)-r(x)| = |r(y) -x| =|(r(y) -y) +(y-x)| = |r(y)-y|>|y-x|.\]
This contradicts the fact that $r$ is Lipschitz.
\end{proof}

The following result is inspired by \cite{KuiLip}[Lemma 11].

\begin{lemma}[Gluing Lemma]
\label{lemma:glue}
Let $X \subset K^n$ be a definable set.
Let $X_i \subset X$ for $i=1, \ldots, m$ be a finite collection of definable sets and let
$r_i : X \to X_i$ be definable Lipschitz retractions.
Then there exists a definable Lipschitz retraction
\[r: X\to\bigcup\limits_{i=1}^mX_i.\]
\end{lemma}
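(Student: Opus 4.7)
The plan is to induct on $m$, so that it suffices to treat the case $m=2$: given definable Lipschitz retractions $r_1, r_2 : X \to X_1, X_2$, construct a definable Lipschitz retraction $r : X \to X_1 \cup X_2$. I would define
$$ r(x) = \begin{cases} r_1(x) & \text{if } d(x, X_1) \leq d(x, X_2), \\ r_2(x) & \text{otherwise.} \end{cases} $$
Since distances to definable sets are definable, $r$ is definable. It is a retraction onto $X_1 \cup X_2$: if $x \in X_1$ then $d(x, X_1) = 0$, so $r(x) = r_1(x) = x$, and if $x \in X_2 \setminus X_1$ then $d(x, X_2) = 0 < d(x, X_1)$, so $r(x) = r_2(x) = x$.

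The substance is Lipschitz continuity. Fix $x, y \in X$. If both $r(x)$ and $r(y)$ are computed using the same $r_i$, the bound $|r(x) - r(y)| \leq |x - y|$ is immediate from the Lipschitz property of $r_i$. The nontrivial case is, say, $r(x) = r_1(x)$ and $r(y) = r_2(y)$, with $d(x, X_1) \leq d(x, X_2)$ and $d(y, X_2) < d(y, X_1)$. By Lemma \ref{lemma:retract}, $|r_1(x) - x| = d(x, X_1)$ and $|r_2(y) - y| = d(y, X_2)$, so the ultrametric triangle inequality gives
$$ |r_1(x) - r_2(y)| \leq \max\bigl( d(x, X_1),\ |x - y|,\ d(y, X_2) \bigr). $$
It therefore suffices to prove that both $d(x, X_1) \leq |x - y|$ and $d(y, X_2) \leq |x - y|$.

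For this, I would first record two elementary ultrametric inequalities for the distance to any definable $Z \subset K^n$: by picking $z \in Z$ with $|y - z|$ approximating $d(y, Z)$ and applying $|x - z| \leq \max(|x - y|, |y - z|)$, one gets $d(x, Z) \leq \max(|x - y|, d(y, Z))$; moreover, if $|x - y| < d(y, Z)$ then $|y - z| > |x - y|$ forces $|x - z| = |y - z|$, hence $d(x, Z) = d(y, Z)$. Now suppose for contradiction that $|x - y| < d(y, X_2)$. Then $d(x, X_2) = d(y, X_2)$, so $d(x, X_1) \leq d(x, X_2) = d(y, X_2)$, and
$$ d(y, X_1) \leq \max(|x - y|, d(x, X_1)) \leq d(y, X_2), $$
contradicting $d(y, X_2) < d(y, X_1)$. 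Hence $d(y, X_2) \leq |x - y|$, and then $d(x, X_1) \leq d(x, X_2) \leq \max(|x - y|, d(y, X_2)) = |x - y|$, as required.

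The only thing to be careful about is the bookkeeping with the ultrametric inequality and with the (possibly non-attained) infima defining $d(\cdot, Z)$; no step requires more than the strong triangle inequality and the case split above, so I do not expect any real obstacle beyond patient verification. The induction from $m = 2$ to general $m$ is a straightforward iteration, applying the $m = 2$ case to the retractions $r : X \to \bigcup_{i < m} X_i$ (given by induction) and $r_m : X \to X_m$.
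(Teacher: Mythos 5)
Your proof is correct, and while it starts from the same definition of $r$ and the same reduction to $m=2$, it routes the hard case differently from the paper. The paper's proof is a direct ultrametric chain: using Lemma~\ref{lemma:retract} it establishes $|x-r(y)| \geq d(x,X_2) \geq d(x,X_1) = |r(x)-x|$ and $|y-r(x)| \geq d(y,X_1) > d(y,X_2) = |y-r(y)|$, then repeatedly exploits the equality case of the strong triangle inequality to conclude $|x-y| = |x-r(y)| \geq |r(x)-r(y)|$. You instead bound $|r(x)-r(y)| \leq \max\bigl(d(x,X_1),\, |x-y|,\, d(y,X_2)\bigr)$ via the three-term decomposition $r_1(x)-r_2(y) = (r_1(x)-x)+(x-y)+(y-r_2(y))$, and then separately prove $d(y,X_2) \leq |x-y|$ (by a short contradiction argument using the ultrametric observations $d(x,Z)\leq\max(|x-y|,d(y,Z))$ and the equality $d(x,Z)=d(y,Z)$ when $|x-y|<d(y,Z)$) and $d(x,X_1) \leq |x-y|$. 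Both proofs rest on the same two ingredients, Lemma~\ref{lemma:retract} and the strong triangle inequality, and are of comparable length; yours isolates the two key distance inequalities as the crux, which makes the mechanism slightly more transparent, whereas the paper's chain of equalities among $|x-r(y)|$, $|y-r(x)|$, $|r(x)-r(y)|$ is a more computational but equally valid path.
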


\begin{proof}
With an easy induction on $m$, we can assume that $m=2$.
So, we have two definable sets $X_1, X_2$.
Let us define $r$ by
\[
\begin{array}{cccc}
r:  & X & \to & X_1 \cup X_2 \\
& x & \mapsto &
\begin{cases}  r_1(x) & \text{if}  \ d(x,X_1) \leq d(x,X_2) \\
r_2(x) & \text{otherwise}.
\end{cases}
\end{array}
\]
Let $x,y \in X$ and let us prove that $|r(x)-r(y)| \leq |x-y|$.\\

\textbf{Case 1}.  $d(x,X_1) \leq d(x,X_2)$ and $d(y,X_1)\leq d(y,X_2)$.\\

Then $r(x)= r_1(x)$ and $r(y) = r_1(y)$, so $|r(x)-r(y)| \leq |x-y|$ because $r_1$ is Lipschitz. \\

\textbf{Case 2}.  $d(x,X_1) > d(x,X_2)$ and $d(y,X_1) > d(y,X_2)$.\\

Then $r(x)= r_2(x)$ and $r(y) = r_2(y)$, so $|r(x)-r(y)| \leq |x-y|$ because $r_2$ is Lipschitz. \\

\textbf{Case 3}. $d(x,X_1) \leq d(x,X_2)$ and $d(y,X_1)> d(y,X_2)$.\\

 This implies that $r(x)=r_1(x)\in X_1$ and $r(y)=r_2(y)\in X_2$.
 We obtain:
\begin{equation}
\label{eq1}
|x-r(y)| \geq d(x,X_2) \geq d(x,X_1) =|r(x)-x|.
\end{equation}
The last equality follows from Lemma \ref{lemma:retract}.
Then
\begin{equation}
\label{eq2}
|x-r(y)| \geq \max(|r(x)-x|,|x-r(y)|) \geq |r(x)-r(y)|
\end{equation}
where the first inequality follows from \eqref{eq1}.
Moreover
\begin{equation}
\label{eq3}
|y-r(x)| \geq d(y,X_1) > d(y,X_2) = |y-r(y)|.
\end{equation}
The last equality follows from Lemma \ref{lemma:retract} again.
Then
\begin{equation}
\label{eq4}
|r(y)-r(x)| =|(r(y)-y)+(y-r(x))| \overset{\eqref{eq3}}{=} |y-r(x)|.
\end{equation}
With one more step, we get
\begin{equation}
\label{eq5}
|x-r(y)| \overset{\eqref{eq2}}{\geq} |r(x)-r(y)| \overset{\eqref{eq4}}{=} |y-r(x)| \overset{\eqref{eq3}}{>} |y-r(y)|.
\end{equation}
Finally
\begin{equation*}
|x-y| = |(x-r(y))+(r(y)-y)| \overset{\eqref{eq5}}{=}  |x-r(y)| \overset{\eqref{eq2}}{\geq} |r(x)-r(y)|,
\end{equation*}
which is what we have to show.
\end{proof}

\subsection{Centred cells}

\begin{defi}[Centred cells]
\label{defi:centredcell}
Let $m$ and $n'\leq n$  be integers.
We say that a definable set $C\subset (K^\times)^n$ is an open centred cell if it is of the form
\[C = rv_m^{-1}(G) \cap (K^\times)^n,\]
for some set $G \subset (RV_m )^n$, that is,
\[
C = \{x\in (K^\times)^n \mid (rv_m(x_1),\ldots,rv_m(x_n))\in G  \}.
\]
Furthermore, if $C' \subset K^{n'}$ is an open centred cell,  we say that
\[C := C' \times \{\underbrace{(0,\ldots,0)}_{n-n' \ \text{times}}\} \subset K^{n}\]
is a centred cell.
\end{defi}

A centred cell is thus just a pullback under $rv_m$ of some definable subset $G$ of $RV_m$ for some $m$,
where we call a subset $A$ of $K^n\times \ZZ^\ell\times \prod_{i=1}^N RV_{m_i}$ a definable set whenever
its natural pullback in $K^{n+\ell+N}$ (coordinatewise under $\ord$ and $rv_{m_i}$)  is a definable set.

\begin{defi}[Monomial function]
\label{defi:monomial}
Let $C' \subset K^{n'}$ be an open centred cell, and
$C=C'\times \{(0,\ldots,0)\} \subset K^n$ be the associated centred cell.
We say that a definable map $f: C \to \Z$ is a monomial function if there exists an integer $m$,
and a definable map $f_0 : (RV_m)^{n'} \to \Z$ such that
$f = f_0 \circ \pi$ where $\pi : C \to (RV_m)^{n'}$ is  defined by
\[ (x_1,\ldots,x_n) \mapsto ( rv_m(x_1), \ldots, rv_m(x_{n'})).\]
\end{defi}

A monomial function is thus just a definable function induced by a function purely on the $RV_m$ side for some $m$.

The following lemma illustrates how monomial functions are useful to build new centred cells.

\begin{lemma}
\label{lemma:newcell}
Let $C = rv_m^{-1}(G) \subset K^n$ be an open centred cell.
Let $C' \subset K^{n+1}$ be a set of the form
\[C' = \{(y,t) \in C \times K \st |\alpha(y)| \sq_1 |t| \sq_2|\beta(y)| \ \and \ t\in \lambda Q_{m',n'} \}\]
with the $\sq_i$ either $<$ or no condition for $i=1,2$, and let us assume that $\ord \alpha$ and $\ord \beta$ are monomial functions on $C$.
Then $C'$ is an open centred cell.
\end{lemma}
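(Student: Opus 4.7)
The plan is to show that every one of the defining conditions of $C'$ factors through a single map $rv_{m''}$ for $m''$ large enough, and thus to repackage $C'$ as a pullback $rv_{m''}^{-1}(H)\cap (K^\times)^{n+1}$ for some definable $H\subset (RV_{m''})^{n+1}$, which is precisely the definition of an open centred cell.

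First I would recall that for any nonzero $\xi\in RV_{m'}$, all elements of the fibre $rv_{m'}^{-1}(\xi)$ share the same valuation and the same value of $\ac_{m'}$. Since $\lambda Q_{m',n'}$ is, by its very definition, the set of $t\in K^\times$ satisfying $\ord t\equiv \ord \lambda\pmod{n'}$ together with $\ac_{m'}(t)=\ac_{m'}(\lambda)$, both conditions are determined by $rv_{m'}(t)$ alone; equivalently, $\lambda Q_{m',n'}=rv_{m'}^{-1}(S)$ for some $S\subset RV_{m'}\setminus\{0\}$. Next, by the monomial hypothesis there are definable $\alpha_0,\beta_0:(RV_m)^{n'}\to \ZZ$ with $\ord\alpha(y)=\alpha_0(rv_m(y_1),\ldots,rv_m(y_{n'}))$ and similarly for $\beta$. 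Rewriting the norm inequality $|\alpha(y)|\sq_1|t|\sq_2|\beta(y)|$ as the corresponding (reversed) inequality between the three integers $\ord\alpha(y)$, $\ord t$, $\ord\beta(y)$, this condition depends only on $(rv_m(y_1),\ldots,rv_m(y_{n'}))$ together with $\ord t$, and $\ord t$ is itself determined by $rv_{m'}(t)$. Finally, $y\in C$ is by construction a condition on $(rv_m(y_1),\ldots,rv_m(y_n))$.

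Taking $m''=\max(m,m')$ and using the natural surjections $RV_{m''}\to RV_m$ and $RV_{m''}\to RV_{m'}$, I combine the three conditions into a single definable subset $H\subset (RV_{m''})^{n+1}$ such that the image $(rv_{m''}(y_1),\ldots,rv_{m''}(y_n),rv_{m''}(t))$ lies in $H$ exactly when $(y,t)\in C'$. Since $\lambda Q_{m',n'}\subset K^\times$ forces $t\neq 0$ and $C\subset (K^\times)^n$ forces each $y_i\neq 0$, one has $C'=rv_{m''}^{-1}(H)\cap (K^\times)^{n+1}$, which exhibits $C'$ as an open centred cell. There is no real obstacle here beyond careful bookkeeping of how the various $rv$-maps relate; the content of the lemma is entirely the observation that the monomial hypothesis on $\ord\alpha,\ord\beta$ and the $Q_{m',n'}$-form of the last condition are exactly what is needed so that no condition on $C'$ uses data beyond the $rv_{m''}$-classes of its coordinates.
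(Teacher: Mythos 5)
Your proof is correct and follows essentially the same route as the paper's: pass to a common refinement $m''$ of all the integers $m$ appearing in the data (the $m$ of $C$, the $m$'s witnessing monomiality of $\ord\alpha,\ord\beta$, and the $m'$ of $Q_{m',n'}$), then observe that every defining condition of $C'$ factors through the coordinatewise $rv_{m''}$ map. The only minor slip is that you wrote $m''=\max(m,m')$ without explicitly folding in the integers attached to the monomial functions $\ord\alpha,\ord\beta$ (which a priori need not equal $m$), but this is exactly the harmless enlargement the paper performs, so the argument stands.
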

\begin{proof}
It is well known that a set like $C'$ is definable, since the condition $Q_{m',n'}$ is a definable set. The centred cell comes with an $m$, as an $RV_m$-pullback, and so do the monomial functions $\ord \alpha$ and $\ord \beta$ come with integers $m_1$, $m_2$, witnessing the definition of monomial function.
Increasing some of these $m,m_1,m_2$ and $m'$ if necessary, there is no harm in assuming that they are all equal. Now the lemma follows easily.
\end{proof}

The following result forms part of the induction scheme for the proofs of Theorems \ref{theo:1} and \ref{theo:retract} and will be proved
together with these theorems in Section \ref{sec:induction}.


\begin{prop}[$Mon_n$]
\label{lemma:monomial}
Let $X\subset K^n$ be a definable set.
For each $i=1,\ldots, m$, let $f_i : X \to \Z$ be a definable map.
Then there exists a decomposition 
$\displaystyle X=\coprod_{j=1}^\ell A_j$ in disjoint definable sets
such that
for each index $j \in \{1,\ldots, \ell\} $ there is a definable isometry
$\varphi_j : K^n \overset{\sim}{\to} K^n$ such that
$\varphi_j^{-1}(A)$ is a centred cell and $f \circ \varphi_j$ is a monomial function.
\end{prop}

\begin{rem}
In \cite{Hal10,Hal11}, it is conjectured that the trees of balls $T(X)$ of definable sets $X \subset \Zp^n$ are characterised by a simple
combinatorial condition.
The trees satisfying this combinatorial condition are called trees of level $d$.
One can easily check that if $X \subset \Zp^n$ is a centred cell, its associated tree $T(X)$ is a tree of level $d$ where $d=\dim(X)$.
Hence Proposition \ref{lemma:monomial} implies that up to cutting a definable set in finitely many pieces, the Conjecture 1.1 of \cite{Hal10} holds.
\end{rem}

\subsection{Retraction of centred cells}

\begin{lemma}
\label{lemma:RVtoZ}
Let $C \subset (K^\times)^n$ be an open centred cell.
Put $G:= \ord(C) \subset \Z^n$ and  $C':= \ord^{-1}(G) \subset (K^\times)^n$.
Suppose that the function 
$$
x\mapsto (\ac_m(x_1),\ldots, \ac_m(x_n))
$$
is constant on $C$.
Then there is a definable Lipschitz retraction $r$ from $C'$ to $C$.
\end{lemma}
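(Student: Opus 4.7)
The plan is to define $r$ coordinate by coordinate, modifying only the level-$m$ angular-component digit of each coordinate. Since by hypothesis the tuple $(\acm(x_1), \ldots, \acm(x_n))$ takes a constant value $(c_1, \ldots, c_n)$ on $C$, I would fix once and for all lifts $\tilde c_i \in \OK^\times$ satisfying $\tilde c_i \equiv c_i \pmod{\MK^m}$, and set
\[
r(y)_i = \begin{cases} y_i & \text{if } \acm(y_i) = c_i, \\ \tilde c_i \, \varpi^{\ord(y_i)} & \text{otherwise.} \end{cases}
\]
This map is manifestly definable.

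To check that $r$ lands in $C$ and retracts onto $C$, I would note that in either branch we have $\ord(r(y)_i) = \ord(y_i)$ and $\acm(r(y)_i) = c_i$, so $rv_m(r(y)_i)$ is determined by $\ord(y_i)$ alone. Since $(\ord(y_1), \ldots, \ord(y_n)) \in \ord(C) = G$, there is $x \in C$ with matching valuations, and by constancy of $\acm$ on $C$ such an $x$ satisfies $rv_m(x_j) = rv_m(r(y)_j)$ for every $j$. Because $C$ is an $rv_m$-preimage of a subset of $(RV_m)^n$, the point $r(y)$ lies in the same $rv_m$-fibre as $x$, hence in $C$. That $r$ is the identity on $C$ is immediate from the first branch, since $\acm(y_i) = c_i$ whenever $y \in C$.

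The main step is verifying Lipschitz continuity, which by the sup-norm reduces to a coordinate-wise estimate $|r(y)_i - r(z)_i| \leq |y_i - z_i|$. If both $y_i, z_i$ fall in the first branch, the bound is trivial; if both fall in the second branch, then $r(y)_i - r(z)_i = \tilde c_i(\varpi^{\ord(y_i)} - \varpi^{\ord(z_i)})$, so either $r(y)_i = r(z)_i$ (equal valuations) or both sides equal $q^{-\min(\ord(y_i),\ord(z_i))}$. The genuine difficulty is the mixed case, say $\acm(y_i) = c_i \neq \acm(z_i)$. When $\ord(y_i) \neq \ord(z_i)$, distinct valuations immediately resolve the estimate; when $\ord(y_i) = \ord(z_i) = v$, the disagreement of angular components at level $m$ forces $\ord(y_i\varpi^{-v} - z_i\varpi^{-v}) \leq m-1$ and hence $|y_i - z_i| \geq q^{1-v-m}$, while $y_i\varpi^{-v}$ and $\tilde c_i$ both reduce to $c_i$ modulo $\MK^m$, yielding $|r(y)_i - r(z)_i| = |y_i - \tilde c_i \varpi^v| \leq q^{-v-m} < |y_i - z_i|$. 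This mixed case is the only real obstacle; once framed in terms of reductions modulo $\MK^m$ it becomes a direct ultrametric calculation.
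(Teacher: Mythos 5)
Your construction is based on the right idea, and the coordinate-wise Lipschitz case analysis you carry out (trivial same-branch cases, the ultrametric estimate $|y_i-\tilde c_i\varpi^v|\leq q^{-v-m}<q^{1-v-m}\leq|y_i-z_i|$ in the mixed case at equal valuation) is correct and is essentially the same disjunction the paper uses. However, there is a genuine gap at the assertion that the map is ``manifestly definable.'' The function $y_i\mapsto \tilde c_i\varpi^{\ord(y_i)}$ is \emph{not} definable in any of the structures considered in the paper. If it were, then the set $\{\varpi^k : k\in\ZZ\}$ would be definable (it is the image of $K^\times$ under $y\mapsto\varpi^{\ord(y)}$, or equivalently the fixed-point set of that map composed with a unit). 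But that set is an infinite discrete subset of $K$, hence has dimension $0$, and a $0$-dimensional definable subset of $K$ is a finite union of point-cells, hence finite. Equivalently, your formula implicitly invokes a section $\gamma\mapsto\varpi^\gamma$ of the valuation, and no such section is definable.

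The gap is easily repaired, in two standard ways. One can replace $\tilde c_i\varpi^{\ord(y_i)}$ by $s_i(\ord(y_i))$ where $s_i$ is a definable Skolem function producing, for each $\gamma$ in the relevant range, some element of valuation $\gamma$ with $\acm$ equal to $c_i$; your Lipschitz calculation only uses that $r(y)_i$ depends on $\ord(y_i)$ alone, has the same valuation, and has $\acm=c_i$, so it goes through verbatim. Alternatively, one can do as the paper does: fix representatives $\xi_1,\dots,\xi_N$ of $\OK^\times/(1+\MK^m)$ and define $r(y)_i=\xi_j y_i$ where $\xi_j$ is the unique representative making $\acm(\xi_j y_i)=c_i$. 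This is piecewise a scalar multiple of $y_i$ (the pieces being the finitely many $\acm$-fibres), hence definable, and the same case analysis gives the Lipschitz bound. In short: the mathematical content of your proof is correct and parallels the paper's, but the specific map you wrote down must be replaced by one of these definable variants.
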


\begin{proof}
The set $(\OK^\times / 1+\MK^m)$ is finite, of size
$N= (q-1)q^{m-1}$.
Let
\[\xi_1, \xi_2, \ldots , \xi_N \in \OK^\times \]
be a set of representatives of $(\OK^\times / 1+\MK^m)$ with the extra condition
\begin{equation}
\label{xi1}
\xi_1=1.
\end{equation}
For  $(u_1,\ldots, u_n) \in (RV_m)^n$ let us set
\begin{align*}
(\gamma_1, \ldots, \gamma_n) & := \ord(u_1,\ldots, u_n)\in \Z^n \\
A & := rv_m^{-1}(u_1\ldots, u_n) \subset (K^\times)^n\\
B & := \ord^{-1}(\gamma_1, \ldots, \gamma_n)\subset (K^\times)^n.
\end{align*}
One has
the following decomposition
\begin{equation}
B = \coprod_{(i_1 \ldots i_n) \in \{1\ldots N\}^n}  (\xi_{i_1}, \ldots, \xi_{i_n}) \cdot A .
\end{equation}
By definition of $C$ and $C'$, if $x\in C'$, there exists a unique
$n$-uple $(i_1 \ldots i_n)\in \{1,\ldots,N\}^n$ such that
$(\xi_{i_1}x_1, \ldots, \xi_{i_n}x_n)$ lies in $C$.
We define $r$ as follows.
\[
\begin{array}{cccc}
r: & C' & \to & C \\
   & x & \mapsto &    (\xi_{i_1}, \ldots, \xi_{i_n})\cdot  x
\end{array}
\]
where  $ (i_1 \ldots i_n)$ is the unique $n$-uple of $\{1,\ldots,N\}^n$
such that $(\xi_{i_1}, \ldots, \xi_{i_n}) \cdot x\in C$.

\par
Let us check that $r$ is Lipschitz by a disjunction case.
Let $x=(x_1, \ldots, x_n)$ and $y=(y_1,\ldots, y_n)$ be in $C'$.
Let $(\xi_{i_1}, \ldots, \xi_{i_n})$, resp.  $(\xi'_{i_1}, \ldots, \xi'_{i_n})$, be the
$n$-uple that appears in the definition of $r$ for $x$, resp.~for $y$.
Let us fix some index $j\in \{1, \ldots ,n\}$, and let us check that
$|r(x)_j - r(y)_j|  \leq |x_j - y_j|$.

\par
\textbf{Case 1:} $|x_j|=|y_j|$.

\par
\textbf{Case 1.1:}  $\acm(x_j)=\acm(y_j)$.

\par
In this case the constancy condition of the lemma implies that
$\xi_{i_j} = \xi'_{i_j}$.
So
\[|r(x)_j-r(y)_j| = | \xi_{i_j}(x_j-y_j)| = |x_j-y_j|.\]
\par
\textbf{Case 1.2:} $\acm(x_j) \neq \acm(y_j)$.

\par
The case condition implies that
\begin{equation}
\label{eq:pim}
|\varpi^{m}|\cdot |x_j| \leq  |x_j-y_j|
\end{equation}
By the constancy hypotheses, and according to the definition of $r$:
\[\acm(r(x)_j)  = \acm(r(y)_j)\]
and
\[ |r(x)_j| = |x_j| =|y_j|  =|r(y)_j|.\]
It follows that
\begin{equation}
\label{eq:rvmxy}
rv_m(r(x)_j) = rv_m(r(y)_j).
\end{equation}
So \eqref{eq:pim} and \eqref{eq:rvmxy} imply that
\[|r(x)_j- r(y)_j| \overset{\eqref{eq:rvmxy}}{\leq}  |\varpi^m|\cdot |x_j| \overset{\eqref{eq:pim}}{\leq}  |x_j-y_j|.\]

\par
\textbf{Case 2:}$|x_j| <|y_j|$.

\par
In this case we have
\[|r(x_j)|=|x_j| < |y_j|=|r(y)_j|.\]
So,
\[ |r(x)_j-r(y)_j| =|r(y)_j| = |y_j| = |x_j-y_j|,\]
and we are done.
\end{proof}

\begin{cor}
\label{cor:RVtoZ}
Let $C \subset (K^\times)^n$ be an open centred cell.
Put $G:= \ord(C) \subset \Z^n$ and $X:= \ord^{-1}(G) \subset (K^\times)^n$.
Then there exists a definable Lipschitz retraction $r: X \to C$.
\end{cor}

\begin{proof}
Since $(\OK^\times / 1+\MK^m)$ is finite, there exists a finite partition
$\displaystyle C = \coprod_{j\in J} C_j$ such that for each $j\in J$,
$C_j$ is an open centred cell and such that the map
 $(\overline{ac}_m)^n: C_j \to (\OK^\times / 1+\MK^m)$ is constant.
Thanks to Lemma \ref{lemma:RVtoZ}, for each $j\in J$ there exists a definable Lipschitz retraction
$r_j : X \to C_j$.
Finally, by Lemma \ref{lemma:glue}, there exists a definable Lipschitz retraction $r: X \to C$.
\end{proof}

\begin{lemma}
\label{lemma:clebis}
Let $U \subset \Z^{n-1}$ be a Presburger set.
Let $a,b $ be positive integers.
$\alpha,\beta : U\to b+a\Z$ be definable functions.
Let us set
\begin{align*}
V' & =\{ (u,v)\in U\times \ZZ \st \alpha(u) \sq_1 v \sq_2 \beta(u) \}, \\
V & = V' \cap \big(\Z^{n-1} \times (b + a\Z)\big),\\
\end{align*}
where $\sq_i$ is $<$ or no-condition for $i=1,2$.
Then there exists a definable Lipshitz retraction
$r: \ord^{-1}(V') \to \ord^{-1}(V)$.
\end{lemma}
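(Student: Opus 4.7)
The plan is to construct $r$ so that it fixes the first $n-1$ coordinates and modifies only the last coordinate $x_n$. For $x=(x_1,\dots,x_n)\in\ord^{-1}(V')$, write $u=(\ord x_1,\dots,\ord x_{n-1})$ and $v=\ord x_n$; the task is to choose a target valuation $v'=v'(u,v)\in V_u:=V'_u\cap(b+a\ZZ)$ and set $r(x)_n$ to be some element of valuation $v'$ derived from $x$, in a Lipschitz way.

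The guiding construction is the ``round-up'': take $v'$ to be the smallest element of $b+a\ZZ$ with $v'\geq v$, and set $r(x)_n=\varpi^{v'-v}x_n$. Since $v'\geq v$ this is contractive multiplication on $x_n$, and since $v'$ depends only on $v$ (not on $u$), a short case analysis on $v_x,v_y,v'_x,v'_y$ using monotonicity of the round-up and the ultrametric gives Lipschitzness uniformly in $u_x,u_y$: if $v_x=v_y$ the multiplicative factor is the same on both sides; if $v_x<v_y$, monotonicity gives $v'_x\leq v'_y$ and $|r(x)_n-r(y)_n|\leq q^{-v'_x}\leq q^{-v_x}=|x_n-y_n|$. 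This target $v'$ lies in $V_u$ whenever $\sq_2$ is no condition (since $\beta(u)\in b+a\ZZ$ and $\beta(u)\geq v$ force $v'\leq\beta(u)$), with an analogous verification of the lower bound regardless of $\sq_1$.

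The nontrivial case is $\sq_2={<}$ with $v$ in the ``top strip'' $\{\beta(u)-a+1,\dots,\beta(u)-1\}$, where $v'=\beta(u)\notin V_u$ and the correct target is $v'=\beta(u)-a$. Multiplication by $\varpi^{v'-v}$ would now expand distances, so I would replace it by an additive correction $r(x)_n=x_n-w(x)$ for some definable $w(x)$ of valuation $\beta(u)-a$; the ultrametric together with the top-strip inequality $\ord(x_n)>\beta(u)-a$ then forces $\ord(r(x)_n)=\beta(u)-a\in V_u$. To define $w$ concretely I would first apply the Presburger cell decomposition of Section~\ref{section:Presburger} to partition $U$ into pieces on which $\beta$ is affine-linear, $\beta(u)=c_0+\sum_{i<n}c_iu_i$, and take $w(x)=\varpi^{c_0-a}\prod_{i<n}x_i^{c_i}$, a monomial function in the sense of Definition~\ref{defi:monomial}. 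The resulting generic and top-strip retractions are combined via Lemma~\ref{lemma:glue}.

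The main obstacle I expect is the Lipschitz verification for the top-strip construction when $u_x\neq u_y$. Same-$u$ points pose no problem because the correction $w$ cancels, giving $|r(x)_n-r(y)_n|=|x_n-y_n|$. For distinct $u$s one must bound $|w(x)-w(y)|$ by $|x-y|$, which is tight: the naive ultrametric product estimate may fall short by a factor depending on the coefficients of $\beta$, and overcoming this requires either a finer Presburger refinement of $U$ (isolating strata on which $\beta$ varies slowly relative to $|x_i|$) or a cleverer choice of $w$ that matches $|w(x)-w(y)|$ to specific differences $|x_i-y_i|$, exploiting the top-strip constraint $|x_n|<|w(x)|$. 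This bookkeeping, together with the final gluing via Lemma~\ref{lemma:glue}, is where I expect the technical core of the proof to lie.
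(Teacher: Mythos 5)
Your ``round-up'' construction for the generic case coincides exactly with the paper's: the paper defines $r(x_1,\dots,x_n)=(x_1,\dots,x_{n-1},\varpi^i x_n)$ where $i\in\{0,\dots,a-1\}$ is the unique shift for which the image lies in $\ord^{-1}(V)$, and verifies Lipschitzness by the same two-case disjunction you sketch ($|x_n|=|y_n|$: the shift $i$ depends only on $\ord(x_n)\bmod a$, so it agrees and multiplication by $\varpi^i$ contracts; $|x_n|\neq|y_n|$: the ultrametric gives $|r(x)_n-r(y)_n|\leq\max(|\varpi^i x_n|,|\varpi^{i'}y_n|)\leq\max(|x_n|,|y_n|)=|x_n-y_n|$). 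That part of your proposal matches the paper precisely.

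Where you depart is the ``top strip'' machinery, and this is where the gap in your proposal lies. The paper makes no separate construction there: it uses the single multiplicative round-up throughout and its entire argument is shorter than your generic case alone. Your instinct that the naive round-up can overshoot $\beta(u)$ when $\sq_2$ is $<$ and $\ord(x_n)\in\{\beta(u)-a+1,\dots,\beta(u)-1\}$ is a legitimate observation, but your proposed repair --- setting $r(x)_n = x_n - w(x)$ with $w$ a monomial $\varpi^{c_0-a}\prod x_i^{c_i}$ on Presburger pieces where $\beta$ is affine --- is not carried out, and as you yourself write, the required bound $|w(x)-w(y)|\leq|x-y|$ across distinct $u$-fibers is not established. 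In fact it fails in general: for a monomial with an exponent $|c_i|\geq 2$ the difference $|w(x)-w(y)|$ can exceed $|x_i-y_i|$ by an unbounded factor, so the ``finer Presburger refinement'' or ``cleverer choice of $w$'' you gesture at is doing essentially all the work and none of it is supplied. Separately, gluing the two constructions via Lemma~\ref{lemma:glue} requires each of them to be a Lipschitz retraction \emph{from all of} $\ord^{-1}(V')$ onto its respective target piece, and you have not explained how to extend the additive construction off the top strip. So the proposal, as written, is incomplete: it reproduces the paper's construction in the bulk but then branches off into an unverified auxiliary construction that the paper does not use and that you do not show to work.
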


\begin{proof}
Define $r$ by
\[
\begin{array}{cccc}
r : & \ord^{-1}(V') & \to & \ord^{-1}(V) \\
   & (x_1, \ldots, x_n) & \mapsto & (x_1,\ldots, x_{n-1},\varpi^i x_n)
\end{array}
\]
where $i$  is the unique index $i\in \{0, \ldots, a-1\}$ such that $(x_1,\ldots, x_{n-1},\varpi^ix_n) \in \ord^{-1}(V)$.
Thanks to the definitions of $V$ and $V'$, $r$ is well defined.
Let us prove that $r$ is Lipschitz. \par
So let $x=(x_1,\ldots,x_n)$ and $x'=(x'_1,\ldots, x'_n) \in \ord^{-1}(V')$, and let us prove that
$|r(x)-r(x')| \leq |x-x'|$.
Since $r$ does not change the first $n-1$ coordinates, it suffices to check that
$|r(x)_n - r(x')_n| \leq |x_n-x'_n|$.
Let $i$ (resp. $i'$) be the integer such that
$r(x)_n = \varpi^ix_n$ (resp. $r(x')_n = \varpi^{i'}x'_n$).\par
\textbf{Case 1:} $|x_n| =|x'_n|$.

\par
In this case $i=i'$ because $i$ (resp. $i'$) is the smallest integer $\geq 0$ such that $\ord(x_n)+i \in b+ a\Z$
(resp. $\ord(x'_n) +i' \in a\Z+b$).
So
\[| r(x)_n - r(x')_n| = |\varpi^i(x_n-x'_n)| \leq |x_n-x'_n|.\] \par
\textbf{Case 2:} $|x_n| >|x'_n|$.

\par
We have
\[|r(x)_n-r(x')_n)|  \leq \max(|\varpi^ix_n|, |\varpi^{i'}x'_n|)   \leq  |x_n| =  |x_n-x'_n|,\]
which finishes the proof.
\end{proof}

\subsection{Cell decomposition and preparation with Lipschitz centers}

In this section, we improve Proposition 4.6  of \cite{ClHa} (and Proposition 2.4 of \cite{ClCoLo}) by adding a kind of preparation to a cell decomposition statement. Instead of reproving 4.6 completely and observing that the preparation can be ensured as required, we give a blueprint on how to derive preparation from cell decomposition, in our context. This blueprint does not yet seem to work in more general P-minimal structures as in \cite{HaMaAver}. Recall that the idea of cell decomposition/preparation in the $p$-adic context goes back to P.~Cohen \cite{Cohen} and J.~Denef \cite{Denef}.
Let us first recall the notion of cells in our context, slightly adapted from the notion of \cite[Definition~3.1]{ClCoLo}.

\begin{defi}[$p$-adic cells]\label{def::cell}
Let $Y$ be a definable set of $\Z^M\times K^N$.
A cell $A\subset K\times Y$ over $Y$ is a (nonempty) set of
the form
 \begin{equation}\label{cel}
A=\{(t,y)\in K\times Y\mid y\in Y',\ \alpha(y) \sq_1 \ord (t-c(y)) \sq_2
\beta(y),\
  t-c(y)\in \lambda Q_{m,n}\},
\end{equation}
with $Y' \subset Y$ a definable set,
constants $n>0$, $m>0$, $\lambda$
in $K$, $\alpha,\beta\colon Y'\to \ZZ$ and $c\colon Y'\to K$ all
definable functions, and $\sq_i$ either $<$ or no
condition, and such that $A$ projects surjectively onto $Y'$.
 We call $c$ the center of the cell $A$, $\lambda Q_{m,n}$ the coset
of $A$, $\alpha$ and $\beta$ the boundaries of $A$, the $\sq_i$ the boundary conditions of $A$, and $Y'$ the
base of $A$. If $\lambda=0$ we call $A$ a $0$-cell
and if $\lambda\not=0$ we call $A$ a $1$-cell.
\end{defi}

\begin{theo}[$p$-adic cell decomposition/preparation with Lipschitz centers]
\label{theo:celldec}
Let $Y$ be a definable set of $\Z^M\times K^N$ and $X \subset Y\times K^{n}$ be a definable set, and $f_i:X \to \ZZ$ be definable functions, for $i=1\ldots m$.
Then there exists a partition $\displaystyle X=\coprod_{j=1}^\ell X_j$
such that for each $j$, for a well chosen coordinate projection $\pi_j: Y\times K^{n}\to Y\times K^{n-1}$, one has
\begin{itemize}
\item[a)] $X_j$ is a cell over $\pi_j(Y\times K^{n})$ with center $c_j: \pi_j(X_j)  \to K$ which is Lipschitz with respect to the variables of $K^{n-1}$
(i.e. for all $y\in Y$, the map $c_j(y,\cdot)$ is Lipschitz on $\pi_j(X_j)_y$)
and with coset $\lambda_j Q_{m_j,n_j}$ for some $\lambda_j\in K$ and positive integers $m_j$, $n_j$.
\item[b)] for all $i=1\ldots m$ and $j=1\ldots \ell$,
there exists a rational number $a_{i,j}\in \frac{1}{n_j}\Z$ and a definable map $h_{i,j}: \pi_j(X_j) \to \Z$ such that for all $(y,x) \in X_j$
\begin{equation}\label{prep}
 f_i(y,x) = h_{i,j}(y,\hat x) + {a_{i,j}}\ord\big(\frac{x_n-c_j(y,\hat x)}{\lambda_j}\big),
 \end{equation}
\end{itemize}
where we write $(y,x)=(y,\hat x,x_n)=(y,\pi_j(x),x_n)$ and with the convention that $0/0=1$ in (\ref{prep}).
\end{theo}

The argument we will give derives preparation from cell decomposition and some additional properties which seem to be unknown in more general P-minimal structures than our structures. We now give two of these additional properties.

 \par
A definable function $f: \ZZ^m\times K^n\to \ZZ^\ell$ is piecewise (with definable pieces) equal to the restriction to the piece of a function
$$
\gamma(z) + h(x)
$$
for definable functions $\gamma:\ZZ^m\to \ZZ^\ell$ and $h:K^n\to \ZZ^\ell$, by quantifier elimination.
\par

Similarly, a definable function $g: \ZZ^m\times K^n\to K^\ell$ is piecewise (with definable pieces) induced by definable functions $h:K^n\to K^\ell$, again by quantifier elimination.

These properties will be used in the following proof of Theorem \ref{theo:celldec}. 
We first give an extra definition (by induction on $n$) and a lemma.

\begin{defi}\label{def:skel}
Let $Y$ and $X \subset Y\times K^{n}$ be definable sets. We call $X$ a full cell over $Y$ with full centers $(c_1,\ldots, c_n)$ if it is a cell over $Y\times K^{n-1}$ with center $c_n$ and if the base of $X$ is itself a full cell over $Y$ with full centers $(c_1,\ldots, c_{n-1})$. For such a full cell $X$ over $Y$, the image in $Y\times (\ZZ\cup\{+\infty\})^n$ of $X$ under the map
$$
(y,z)\in X\mapsto (y, \ord (z_1-c_1(y)) ,\ldots, \ord (z_n-c_n(y,z_1,\ldots,z_{n-1}))
$$
is called the skeleton of the full cell $X$ over $Y$.
\end{defi}

\begin{lemma}\label{lem:skel}
Let $X\subset \ZZ^m\times K^n$ be a full cell over $\ZZ^m$ with skeleton $A\subset \ZZ^{m+n}$ over $\ZZ^m$. Suppose that for each natural number $N$, there are infinitely many tuples $z\in \ZZ^m$ such that $A_z:=\{w\in \ZZ^n\mid (z,w)\in A\}$ is of cardinality at least $N$. Then there are $z$ and $z'$ in $\ZZ^m$ with $z\not=z'$ such that $X_{z}:=\{x\in K^n\mid ( z,x)\in X\}$  and $X_{z'}$ have nonempty intersection.
\end{lemma}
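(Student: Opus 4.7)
The plan is to prove the lemma by induction on $n$, using the piecewise form of definable functions between $\ZZ^m$ and $K^\ell$ from quantifier elimination, together with a Presburger analysis of the skeleton.

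In the base case $n=1$, $X$ is a cell over $\ZZ^m$ with center $c_1 : Y' \to K$, boundary functions $\alpha, \beta : Y' \to \ZZ$, and coset $\lambda Q_{m_1, n_1}$, so that $A_z = \{v \in \ord(\lambda) + n_1 \ZZ : \alpha(z)\sq_1 v \sq_2 \beta(z)\}$. Using QE in the mixed language, I would decompose $Y'$ into finitely many definable pieces on each of which $\alpha$ and $\beta$ are affine Presburger and $c_1$ has a standard parametric form, say $c_1(z) = \sum_i a_i \varpi^{\gamma_i(z)}$ for fixed $a_i \in K$ and Presburger $\gamma_i$. A pigeonhole argument applied to the nested family $S_N = \{z : |A_z| \geq N\}$ preserves the hypothesis on one of these finitely many pieces; restrict to such a piece $P$. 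On $P$ the length $\beta(z) - \alpha(z)$ is unbounded, so by a direct Presburger analysis there exist $z \neq z' \in P$ with $A_z \cap A_{z'}$ nonempty and containing values arbitrarily far below $\max(\alpha(z), \alpha(z'))$. Choosing $v \in A_z \cap A_{z'}$ with $v + m_1 \leq \ord(c_1(z) - c_1(z'))$---possible because the variation of $c_1$ on $P$ is Presburger-controlled while the skeleton extends downward unboundedly---one finds a common $t \in K$ with $t - c_1(z)$ and $t - c_1(z')$ both in $\lambda Q_{m_1, n_1}$ and of valuation $v$, whence $t \in X_z \cap X_{z'}$.

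For the inductive step $n \geq 2$, use the recursive structure of a full cell: the base $Y' \subset \ZZ^m \times K^{n-1}$ of $X$ is itself a full cell over $\ZZ^m$ with skeleton $A'$ equal to the projection of $A$ to the first $m+n-1$ coordinates. If the fibers $|A'_z|$ are unbounded along infinitely many $z$, the inductive hypothesis provides $z \neq z'$ and $\hat x \in Y'_z \cap Y'_{z'}$; by pigeonholing the cardinalities $|A_z|$ against the bound coming from $A'_z$, one can arrange that the last-coordinate skeletons over $(z, \hat x)$ and $(z', \hat x)$ remain unbounded, so the $n=1$ case applied to these one-variable $K$-fibers produces a common point in $X_z \cap X_{z'}$. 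If instead $|A'_z|$ is uniformly bounded, then the unboundedness of $|A_z|$ forces unboundedness in the last-coordinate direction uniformly over $(z, \hat x)$, and a combination of pigeonhole over the finitely-many possible profiles of $A'_z$ with induction on the base pins down two $z$'s sharing a base point $\hat x$ to which the $n=1$ case applies.

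The main obstacle is the base case when $c_1$ genuinely varies with $z$: the condition $A_z \cap A_{z'} \ne \emptyset$ alone does not force the corresponding balls around $c_1(z)$ and $c_1(z')$ of radius $q^{-v}$ to overlap, since overlap requires $v$ to lie below $\ord(c_1(z) - c_1(z'))$ by at least $m_1$. The quantitative comparison between the Presburger growth rate of $|A_z|$ and the rate of $\ord(c_1(z) - c_1(z'))$ on the piece is precisely where the unboundedness in the hypothesis is essential: it guarantees enough low-altitude values in $A_z \cap A_{z'}$ to absorb the spread of the center, producing the required common point $t$.
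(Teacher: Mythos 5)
Your proposal misses the single decisive input that the paper uses, namely the quantifier-elimination fact stated just before Definition~\ref{def:skel}: a definable function $g:\ZZ^m\times K^n\to K^\ell$ is, after a finite definable partition of the domain, induced on each piece by a function $h:K^n\to K^\ell$ that does \emph{not} involve the $\ZZ^m$-coordinates. Applied to the centers $c_i:\ZZ^m\times K^{i-1}\to K$ of the full cell $X$, this gives that on each piece the $c_i$ are independent of $z\in\ZZ^m$; in the base case $n=1$, the center $c_1:\ZZ^m\to K$ is piecewise constant. Once that is in hand, the "main obstacle" you single out --- comparing the depth of $A_z\cap A_{z'}$ against $\ord(c_1(z)-c_1(z'))$ --- evaporates: on the relevant piece $c_1(z)=c_1(z')$, so any $v\in A_z\cap A_{z'}$ immediately produces a common point of $X_z\cap X_{z'}$. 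More generally, on a piece where all centers are $z$-independent, the fiber $X_z$ is determined by the skeleton fiber $A_z$, so nonempty skeleton overlap implies nonempty cell overlap; the remaining task is purely Presburger, handled by the cell decomposition for Presburger sets, Theorem~1 of~\cite{CPres}, together with the unboundedness hypothesis on $|A_z|$.

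Your proposed substitute is not correct as written. The "standard parametric form" $c_1(z)=\sum_i a_i\varpi^{\gamma_i(z)}$ is not something quantifier elimination delivers in this two-sorted language (there are no function symbols producing $\varpi^{\gamma(z)}$ as a term), and the actual output of QE is the much stronger piecewise constancy, which your argument does not invoke. The subsequent quantitative step also does not close: the hypothesis only says $|A_z|$ is unbounded over infinitely many $z$, which does not force the skeleton values to extend downward unboundedly (the intervals could grow upward, so $\min A_z$ could stay bounded below), and even when they do there is no a priori comparison between the rate at which $A_z\cap A_{z'}$ reaches small valuations and the rate at which $\ord(c_1(z)-c_1(z'))$ could decrease. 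The inductive step for $n\geq 2$ inherits the same unjustified comparison. In short, you identified the right obstruction but tried to beat it analytically, whereas the paper removes it structurally via the QE fact about mixed-sort definable functions; without that fact your argument has a genuine gap.
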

\begin{proof}
The lemma follows from the cell decomposition theorem for Presburger sets, namely Theorem 1 of \cite{CPres}, and the property about $g$ mentioned just before Definition \ref{def:skel}, applied to the occurring centers of the full cell $X$.
\end{proof}

\begin{proof}[Proof of Theorem \ref{theo:celldec}]
First note that the result including a) but without b) is Proposition 4.6 of \cite{ClHa}, for a general definable subset $Y$ of $\Z^M\times K^N$ for $M,N\geq 0$.
\par

    Clearly we are allowed to work piecewise, and hence, by induction on $m$ we may assume that the $f_i$ do not vanish on $X$.
\par

The simple case that the function $x\in X_y \mapsto f_i(y,x)$ is constant for each $y\in Y$ and each $i$ is immediate, where $X_y = \{x\in K^n\mid (y,x)\in X\}$.
\par

Let us now consider the graph $X'$ of the function
$$
F:(y,x)\in X\mapsto (\ord f_1(y,x),\ldots,\ord f_m(y,x))\in \ZZ^m
$$
and let us put $Y' := Y\times \Z^m$ so that $X'$ can be naturally seen as a subset of $Y'\times K^n$. Let $f_i'$ be the induced function on $X'$ coming from $f_i$ and the natural bijection between $X$ and $X'$. By the simple case treated above, the theorem holds for $X'$ and the functions $f_i'$, yielding cells $X_j'$, functions $h_{i,j}'$ and so on. We now derive from this the result for $X$ and the $f_i$. By the induction hypothesis (with induction on $n$) applied recursively to the base of the cell $X'$, we may suppose that $X'$ is a full cell over $Y'$. Let $A'$ be its skeleton. By the graph construction, the fibers $X'_{z,y}:=\{x\in K^n\mid (z,y,x)\in X'\}$ are all disjoint when $z$ runs over $\ZZ^m$ and when $y$ is any fixed value in $Y$. By Lemma \ref{lem:skel}, this implies that we can partition $X'$ further without changing the centers and reduce to the case that $A'_{z,y}:=\{s\in \ZZ^n\mid (z,y,s)\in A'\}$ is either empty or a singleton for each $z\in \ZZ^m$ and each $y\in Y$. Now the theorem for $X$ and the $f_i$ follows from the piecewise linearity of Presburger definable functions of the cell decomposition theorem for Presburger sets, namely Theorem 1 of \cite{CPres}. Indeed, the dependence of $z$ on $s$ for any fixed $y$ under the condition $(z,y,s)\in A'$ is a piecewise linear function, uniformly so in $y$.
\end{proof}

\section{Proofs of the main result by a joint induction}\label{sec:proof}

We prove Theorems \ref{theo:1} and \ref{theo:retract}, and Proposition \ref{lemma:monomial} by a joint induction, that is, we prove the properties 
$(Mon_n)$,  $(Ret_n)$ and $(Ext_n)$ by induction on $n$.

\subsection{Proofs for the case $n=1$}
\label{section:basis}

When $n=1$, Proposition \ref{lemma:monomial} follows from the $p$-adic cell decomposition (Theorem \ref{theo:celldec}).
For this, one does not need the Lipschitz assertion on the centers of the cells. \par
To prove Theorem \ref{theo:retract} for $n=1$, thanks to
Lemma \ref{lemma:glue}, Theorem \ref{theo:celldec}, and up to translating with the center, one is reduced to find a definable Lipschitz retraction
$r: K \to \overline{C}$ where $C \subset K^\times$ is an open centred cell.
By Corollary \ref{cor:RVtoZ}, one can assume that $C= \ord^{-1}(G)$ for a Presburger set $G \subset \Z$.
Such a set is a finite union of sets of the form
\[\{ g\in \Z \st g\in a\Z +b, \  \and \ \alpha \sq_1 g  \sq_2 \beta \}\]
where $a,b,\alpha,\beta \in \Z$ and each $\sq_i$ is $<$ or no condition.
Thanks to Lemma \ref{lemma:clebis}, we can drop the congruence relation $g \in a\Z +b$ and assume that
\[G = \{g\in \Z \st  \alpha \sq_1 g  \sq_2 \beta \}\]
and are reduced to construct a definable Lipschitz retraction $r: K \to \overline{C}$.
Depending on the values of the $\sq_i$'s (namely $<$ or no-condition) this leaves four cases:
\begin{enumerate}
\item
$C = K^\times$. Then, $\overline{C}=K$, and we take $r=id$.
\item
$C=\{x\in K \st 0 <|x| \leq s\}$, so $\overline{C}=\{x\in K \st |x|\leq s\}$.
Then we take
\[
\begin{array}{cccc}
r: & K & \to & \overline{C} \\
& x &\mapsto &
\begin{cases}
x & \text{if} \ |x| \leq s \\
0 & \text{if} \ |x| >s
\end{cases}
\end{array}
\]
\item
$C=\{x\in K \st  s \leq |x| \leq s'\}=\overline{C}$.
Let us pick $x_0 \in K$ such that $|x_0|=s$.
Then we consider
\[
\begin{array}{cccc}
r: & K & \to & \overline{C} \\
& x &\mapsto &
\begin{cases}
x & \text{if} \ |x| \in C \\
x_0 & \text{otherwise}
\end{cases}
\end{array}
\]
\item
$C=\{x\in K \st  s \leq |x| \}=\overline{C}$.
Let us pick $x_0 \in K$ such that $|x_0|=s$.
Then we consider
\[
\begin{array}{cccc}
r: & K & \to & \overline{C} \\
& x &\mapsto &
\begin{cases}
x & \text{if} \ |x| \in C \\
x_0 & \text{otherwise}
\end{cases}
\end{array}
\]
\end{enumerate}
One easily checks the required conditions in each of these cases.
This proves Theorem \ref{theo:retract} when $n=1$,
and implies Theorem \ref{theo:1}  for $n=1$ as explained on page \pageref{proof:theo1}.

\subsection{Proofs for $n>1$}\label{sec:induction}
We now prove by induction the properties 
$(Mon_n)$,  $(Ret_n)$ and $(Ext_n)$ listed in the introduction.
The basis of the induction has been obtained in Section \ref{section:basis}.\par
So let us fix an integer $n>1$ and let us assume that $(Ret_{n-1})$,  
$(Mon_{n-1})$ and  $(Ext_{n-1})$ hold.
We will prove the statements in the following order:  
$(Mon_n)$,  $(Ret_n)$ and $(Ext_n)$.

\begin{proof}[Proof of Proposition \ref{lemma:monomial} $(Mon_n)$.]
$ $\par
Let us apply Theorem \ref{theo:celldec} to the functions $f_i$.
So we can assume that $X \subset K^n$ is a cell
\[X=\{ (y,t)\in Y\times K \st |\alpha(y)| \sq_1|t-c(y)| \sq_2 |\beta(y)| \ \and \ t-c(y)\in \lambda Q_{m',n'} \} \]
over $K^{n-1}$ with a base $Y$ and a Lipschitz center $c:Y\to K$, and that for each $i \in \{1,\ldots,m\}$ there is
some $a_i\in \Q$ and $h_i: Y \to K$ some definable function such that,
for each $y\in Y, t\in K$ with $(y,t)\in X$, one has that $f_i(y,t)$ is of a prepared form, coming from (\ref{prep}).
By induction hypothesis $(Ext_{n-1})$, we can extend $c:Y \to K$ to a definable Lipschitz
function $\overline{c}: K^{n-1} \to K$.
Let us consider the definable isometry
$\varphi: K^n \to K^n$ defined by $ (y,t) \in K^{n-1}\times K \mapsto (y,t+\overline{c}(y))$.
Then, considering $\varphi^{-1}(X)$, we can assume that $c\equiv 0$.
Hence, for any $(y,t) \in X$ one has
\[ f_i(y,t) = h_i(y) + a_i \ord (t/\lambda),\]
for some rational numbers $a_i$ and some definable functions $h_i$.
By induction hypothesis $(Mon_{n-1})$ applied to the functions $h_i, \ord \alpha, \ord \beta$, and up to cutting $X$ in finitely many pieces, we can assume that
\begin{itemize}
\item
$Y\subset K^{n-1}$ is a centred cell.
\item $X$ is the cell over $Y$ defined by:
\[X=\{ (y,t)\in Y\times K \st |\alpha(y)| \sq_1|t| \sq_2 |\beta(y)| \ \and \ t\in \lambda Q_{m',n'} \} \]
\item for each index $i$ and each $(y,t)\in X$ one has:
\[ f_i(y,t) = h_i(y) + a_i \ord (t/\lambda).\]
\item The functions $h_i$, $\ord \alpha$, and $\ord \beta$ are monomial functions on $Y$.
\end{itemize}
Then according to Lemma \ref{lemma:newcell}, $X$ is a centred cell, and each $f_i$ is a monomial function.
\end{proof}

\begin{proof}[Proof of Theorem \ref{theo:retract} $(Ret_n)$.]
$ $ \par
\textbf{Step 1.} Let us show first that when $\dim(X) <n$, there exists a definable Lipschitz retraction $r : K^n \to \overline{X}$.

\par
By Lemma \ref{lemma:glue}, we can cut $X$ in definable pieces.
Thanks to Proposition \ref{lemma:monomial}, we can assume that
there exists a definable set $X' \subset K^{n-1}$ such that $X = X' \times \{0\}$.
So we can apply our induction hypothesis ($Ret_{n-1}$) to $X'$ and conclude.\par
\textbf{Step 2.} According to $(Mon_n)$, we can assume that $X=\overline{C}$ where $C$ is a centred cell, let us say of the form
$C=C' \times \{(0,\ldots,0)\}$ for some open centred cell $C' \subset K^{n'}$. \par
If $n'<n$, we are reduced to Step $1$. \par
If $n=n'$, then $C$ is an open centred cell.
So according to Corollary \ref{cor:RVtoZ}, we can assume that
\[X = \ord^{-1}(G)\]
for some Presburger set $G\subset \Z^n$.\par
\textbf{Step 3.}
For each $i \in \{1,\ldots,n\}$, let
\[G_i:= \{(g_1,\ldots,g_n)\in G \st g_i \geq g_j \ \text{for all} \ j=1, \ldots, n \}.\]
Then
\[ G = \bigcup_{i=1}^n G_i \]
and
\[X = \bigcup_{i=1}^n \ord^{-1}(G_i). \]
By Lemma \ref{lemma:glue}, we are reduced to prove the result for each $X= \ord^{-1}(G_i)$.
So, replacing $G$ by the $G_i$'s, and up to a permutation of the coordinates, we can assume that $G$ satisfies the following condition
\begin{equation}
\label{condition1}
g_n \geq g_j \mbox{ for all $j=1, \ldots, n$ and all   $(g_1,\ldots,g_n) \in G$}.
\end{equation}
Using the results mentioned in Section \ref{section:Presburger}, one can assume that $G$ still satisfies condition \eqref{condition1} and is moreover of the form
\[ G =\{ (u,v)\in \Z^{n} \st u\in U,\ v\in a\Z+b \ \and \ \alpha(u) \sq_1 v \sq_2 \beta(u) \}\]
where $U \subset \Z^{n-1}$ is a Presburger set, $\alpha,\beta: U \to \Z$ are definable functions, and each $\sq_i$ is $<$ or no condition.
Then, using Lemma \ref{lemma:clebis}, we can remove the congruence condition $v\in a\Z+b$.
\par
\textbf{Step 4.}
We are reduced to the following assertion.
Given the following data
\begin{itemize}
\item a Presburger set $U \subset \Z^{n-1}$;
\item  definable functions
$\alpha,\beta : U \to \Z$ such that for any $u\in U$ there exists some $v\in \Z$ with $\alpha(u) < v < \beta(u)$ and such that
for any $(u_1, \ldots, u_{n-1}) \in U$, we have, by (\ref{condition1}),
\[\alpha(u) \geq  \max_{i=1,\ldots, n-1} u_i;\]
\item a Presburger set
\[ G =\{ (u,v)\in \Z^{n} \st u\in U, \ v\in \Z \ \and \ \alpha(u) \sq_1 v \sq_2 \beta(u) \}.\]
\end{itemize}
We then have to show that there exists a definable Lipschitz retraction
\[r : K^{n} \to \overline{\ord^{-1}(G)}.\]
\par
\textbf{Step 4.1}. We first construct a definable Lipschitz map
\begin{equation}\label{r:partial}
r:\ord^{-1}( U \times \Z) \to \overline{\ord^{-1} (G)}
\end{equation}
such that $r(x)=x$ for all $x\in \ord^{-1}(G)$.
\par
If both $\sq_1$ and $\sq_2$ are no condition, then $G= U\times \Z$ and the  map
\[r : x\in \ord^{-1}(U\times \Z) \mapsto x\]
works. So we can assume that one of $\sq_1$ or $\sq_2$ is $<$.

\par
Recall that the theory of our one-sorted structure $K$ has definable Skolem functions, see \cite{VDDSko}[Theorem 3.2] in the semi-algebraic case and \cite{DenVDD} in the subanalytic case, but this also follows directly from the above cell decomposition result in any of our settings.
If $\sq_2$ is $<$, we let
\[f_- : \ord^{-1}(U) \to K^\times\]
be a definable Skolem function satisfying
\[\ord(f_-(u)) =  \beta(u) - 1 \ \text{for all} \  u \in \ord^{-1}(U).\]
Similarly, if $\sq_1$ is $<$, we let
\[f_+ : \ord^{-1}(U) \to K^\times\]
be a definable Skolem function which satisfies
\[\ord(f_+(u)) = \alpha(u) + 1 \ \text{for all} \ u\in \ord^{-1}(U).\]
Let us set write $H$ for the union of the graph of $f_-$ (when $f_-$ is defined) and the graph of $f_+$ (when $f_+$ is defined), namely,
\begin{align*}
H & =\{ (u,f_+(u)) \st u\in \ord^{-1}(U) \} \cup \{ (u,f_-(u)) \st u\in \ord^{-1}(U) \} \ \ \text{when} \sq_1 \text{and} \sq_2 \text{are} < \\
H & =\{ (u,f_+(u)) \st u\in \ord^{-1}(U) \} \ \   \text{when} \sq_1 \text{is} < \text{and} \sq_2 \text{is no condition} \\
H & =\{ (u,f_-(u)) \st u\in \ord^{-1}(U) \}    \ \ \text{when} \sq_1 \text{is no condition and} \sq_2 \text{is} <.
\end{align*}
By construction, $H \subset \ord^{-1}(G)$, so $\overline{H} \subset \overline{\ord^{-1}(G)}$ and
$\dim(H) =\dim(\overline{H})=n-1$.
According to step 1, we can find
a definable  Lipschitz retraction
\[s : K^{n} \to \overline{H}.\]

\par
We now define our Lipschitz map as desired for (\ref{r:partial}) like this:
\[ \begin{array}{cccc}
r : & \ord^{-1}(U \times \Z) & \to & \overline{\ord^{-1} (G)} \\
&   z & \mapsto &
\begin{cases}
z &  \text{if} \ z \in  \ord^{-1} (G) \\
s(z) & \text{if} \ z \notin  \ord^{-1} (G)
\end{cases}
\end{array}
\]
The key remaining work is to prove that $r$ is Lipschitz.
Let us consider $z=(y,x)$ and $z'=(y',x') \in \ord^{-1}(U \times \Z)$ and let us prove that
$|r(z) - r(z')|\leq |z-z'|$.\par
If $z=(y,x)$ and $z'=(y',x')$ belong simultaneously to $\ord^{-1}(G)$, or to its complement $\ord^{-1}(G)^c$, then
$|r(z)-r(z')| \leq |z-z'|$ because the identity map and the function $s$ are Lipschitz.
So we will assume that
$\ord(z) \in G$ and $\ord(z') \notin G$.
\par
Since $|(y,x) -(y',x')| = \max (|y-y'| , |x-x'|)$ it follows that
$|(y,x)-(y',x)|$ and $|(y',x)-(y',x')|$ are less or equal than
$|(y,x) -(y',x')|$.
So we can assume that $x=x'$ or $y=y'$.
\par
\textbf{Case 1}: $x=x'$.
So $z=(y,x) \in \ord^{-1}(G)$ and
$z'=(y',x) \notin \ord^{-1}(G)$.
For simplicity of notation, let us assume that
$|y_1| \neq |y'_1|$.
In particular, $|z-z'| \geq |y_1|$.
If $\sq_2$ is $<$, let $z''=(y,f_-(y)) \in \ord^{-1}(G)$
(otherwise $\sq_2$ is no condition, $\sq_1$ is $<$, then we set $z''=(y,f_+(y)) \in \ord^{-1}(G)$ and the undermentioned reasoning can also been applied).
Since $z,z'' \in \ord^{-1}(G)$, according to condition \eqref{condition1}, $|f_-(y)| \leq |y_1|$
and $|x| \leq |y_1|$.
So
\[|z-z''| = |x-f_-(y)| \leq |y_1| \leq |z-z'|.\]
Likewise
\[|z'' -z'| = |(y,f_-(y)) - (y',x)| = \max (|y-y'| , |x-f_-(y)|) \]
\[
\leq \max (|y-y'|, |y_1|) \leq |z-z'|\]
So it suffices to show that
$|r(z) -r(z'')| \leq |z-z''|$ and that
$|r(z'') -r(z')| \leq |z''-z'|$.
Since $z,z'' \in \ord^{-1}(G)$,
$r(z)=z$ and $r(z'')=z''$ and this implies the first inequality.
Finally, $z'' \in H$ and $z'\notin \ord^{-1}(G)$ so by definition of
$r$,
$r(z'') = s(z'')$ and $r(z') =s(z')$.
Since $s$ is Lipschitz,
$|r(z'')-r(z')| = |s(z'')-s(z')| \leq |z''-z'|$.
\par
\textbf{Case 2} : $y=y'$.
So $z=(y,x) \in \ord^{-1}(G)$ and $z'=(y,x')\notin \ord^{-1}(G)$.
\par
\textbf{Case 2.1}. $\sq_2$ is the condition $<$ and $\beta (\ord y) \leq \ord x'$,
where $\ord y$ stands for $(\ord y_1,\ldots, \ord y_{n-1})$.

\par
So $|z-z'| =|x-x'|=|x|$.
Let us consider $z''=(y,f_-(y))$.
Then
\[|x'| < |f_-(y)| \leq |x|.\]
So
\[|z'-z''| = |f_-(y)| \leq |x| \leq |z-z'|.\]
Likewise,
\[|z''-z| =|f_-(y)-x| \leq  |x| \leq |z-z'|.\]
So it suffices to show that $|r(z'') -r(z')| \leq |z''-z'|$
and that $|r(z) -r(z'')| \leq |z-z''|$.
The first inequality is true because $r(z')=z'$ and $r(z'')=z''$ since $z',z'' \in \ord^{-1}(G)$.
The second inequality is true because
$z'' \in H$ and $\ord(z) \notin G$ so by definition of $r$,
$r(z'')=s(z'')$ and $r(z)=s(z)$ and $s$ is Lipschitz. \par
\textbf{Case 2.2}. $\sq_1$ is the condition $<$ and $\alpha (\ord y) \geq \ord x'$,
where $\ord y$ stands for $(\ord y_1,\ldots, \ord y_{n-1})$.

\par
So $|z-z'| =|x-x'|=|x'|$.
Let us consider $z''=(y,f_+(y))$.
Then
\[|f_+(y)|  < |x'|.\]
So
\[|z'-z''| = |x'-f_+(y)| =|x'| = |z-z'|.\]
Likewise,
\[|z''-z| =|f_+(y)-x| \leq |f_+(y)|  <|x'| = |z-z'|.\]
So it suffices to show that $|r(z'') -r(z')| \leq |z''-z'|$
and that $|r(z) -r(z'')| \leq |z-z''|$.
This is shown as at the end of Case 2.1.

\textbf{Step 4.2}.
By induction hypothesis ($Ret_{n-1}$), there is a definable Lipshitz retraction
\[\sigma : K^{n-1} \to \overline{\ord^{-1}(U)}.\]
It induces a definable Lipschitz retraction
\[
\begin{array}{cccc}
\tau : &  K^{n} & \to &\overline{\ord^{-1}( U \times \Z)} = \overline{\ord^{-1}(U)} \times K  \\
 & (x_1,\ldots,x_n) & \mapsto & (\sigma(x_1,\ldots,x_{n-1}),x_n)
 \end{array}
\]
Using lemma \ref{lemma:extension}, we can extend $r$ of (\ref{r:partial}) as constructed in Step 4.1 in a Lipschitz way to
\[\overline{r} :\overline{\ord^{-1}(U\times \Z)} \to \overline{\ord^{-1}(G)}\]
which is a definable Lipschitz retraction.
Then $\overline{r} \circ \tau : K^{n} \to \overline{\ord^{-1}(V)}$ is a definable Lipschitz retraction as desired for property $(Ret_n)$.
\end{proof}

\begin{proof}[Proof of Theorem \ref{theo:1} $(Ext_n)$.]
Property $(Ext_n)$ follows from Property $(Ret_n)$ similarly as how Theorem \ref{theo:1} is obtained from Theorem \ref{theo:retract} just below Theorem \ref{theo:retract}.
\end{proof}

To end the paper, we give family versions of Theorems \ref{theo:1} and \ref{theo:retract}.

\begin{theo}
\label{theo:1:fam}
Let $Y\subset K^m$ and $X \subset Y\times K^n$ be definable sets.
Let $f:X \to K^\ell$ be a definable and suppose that for each $y\in Y$, the function $f_y$ sending $z$ to $f(y,z)$ is a Lipschitz function on $X_y:=\{z\mid (y,z)\in X\}$.
There exists a definable function $\widetilde f: Y\times K^n \to K^\ell$ such that, for each $y\in Y$,
$$
\widetilde f_y:z\mapsto \widetilde f(y,z)
$$
is a Lipschitz extension of $f_y$.
\end{theo}

\begin{theo}
\label{theo:retract:fam}
Let $Y\subset K^m$ and $X \subset Y\times K^n$ be definable sets.
There exists a definable function $r:Y\times K^n\to Y\times K^n$ such that, for each $y\in Y$, $r_y$ is a retraction  from  $K^n$ to $\overline{X_y}$, with $X_y=\{z\mid (y,z)\in X\}$ and $\overline{X_y}$ its topological closure.
\end{theo}

These two theorems follow by noting that the proofs of Theorems \ref{theo:1} and \ref{theo:retract} and of Proposition \ref{lemma:monomial} work completely uniformly in $y\in Y$, and that definable Skolem functions can be used to pick the elements like $x_0$ in Section \ref{section:basis} for the case $n=1$. In these last theorems, $Y$ is not allowed to be a definable subset of $\ZZ^m\times K^{m'}$ since that would render impossible the above usage of definable Skolem functions, which indeed only exist in the one-sorted structure $K$.

\bibliographystyle{alpha}
\bibliography{bibli}
\end{document}